\providecommand\@dotsep{5}
\def\listtodoname{List of Todos}
\def\listoftodos{\@starttoc{tdo}\listtodoname}
\numberwithin{equation}{section}
\def\R {{\rm I}\hskip -0.85mm{\rm R}}
\newtheorem{theorem}{Theorem}[section]
\newtheorem{proposition}[theorem]{Proposition}
\newtheorem{lemma}[theorem]{Lemma}
\newtheorem{example}[theorem]{Example}
\newtheorem{remark}{Remark}
\title[On a generalized Kirchhoff equation]
{ On a generalized Kirchhoff equation \\ with sublinear nonlinearities}
\author[J. R. Santos Jr.]{Jo\~ao R. Santos Junior}
\author[G. Siciliano]{Gaetano Siciliano}
\address[J. R. Santos Jr.]{\newline\indent Faculdade de Matem\'atica
\newline\indent 
Instituto de Ci\^{e}ncias Exatas e Naturais
\newline\indent 
Universidade Federal do Par\'a
\newline\indent
Avenida Augusto corr\^{e}a 01, 66075-110, Bel\'em, PA, Brazil}
\email{\href{mailto: joaojunior@ufpa.br }{joaojunior@ufpa.br}}
\address[G. Siciliano]{\newline\indent Departamento de Matem\'atica
\newline\indent 
Instituto de Matem\'atica e Estat\'istica
\newline\indent 
 Universidade de S\~ao Paulo 
\newline\indent 
Rua do Mat\~ao 1010,  05508-090, S\~ao Paulo, SP, Brazil }
\email{\href{mailto:sicilian@ime.usp.br}{sicilian@ime.usp.br}}
\thanks{Jo\~ao R. Santos was partially
supportedby PROCAD/CASADINHO: 552101/2011-7, Brazil. Gaetano Siciliano  was partially supported by
Fapesp and CNPq, Brazil. }
\subjclass[2000]{ 35J15, 35J25, 35Q74.}
\keywords{ Kirchhoff type equation, sublinear problem, topological method.}
\begin{document}

\maketitle
\begin{abstract}
In this paper we consider a generalized Kirchhoff equation in a bounded domain
 under the effect of a sublinear nonlinearity. Under suitable assumptions on the 
data of the problem we show that, with a 
 simple change of variable, the equation can be reduced to a classical semilinear equation
 and then studied with standard  tools.
\end{abstract}
\maketitle

\section{Introduction}


In this article we study the existence of solutions $u:\Omega\to \R$ for the following nonlocal problem in divergence form

\begin{equation}\label{P}\tag{P}
\left \{ \begin{array}{ll}
-\text{div}\left(m\left (u, |\nabla u|_{2}^{2}\right)\nabla u\right)= f(x, u) & \mbox{in $\Omega$,}\\
u=0 & \mbox{on $\partial\Omega$,}
\end{array}\right.
\end{equation}
where $\Omega\subset\R^{N}$ is a bounded domain with smooth boundary, 
$$m:\R\times[0,\infty) \to \R, \quad f:\Omega\times \R\to \R$$ are  given functions satisfying  suitable conditions which will be given later. 
Hereafter we denote with  $|\cdot|_{p}$  the usual $L^{p}(\Omega)-$norm. 
By a solution of the above problem, we mean a function $u_{*}\in H^{1}_{0}(\Omega)\cap L^{\infty}(\Omega)$
such that
$$\forall \varphi\in H^{1}_{0}(\Omega) : \quad
\int_{\Omega} m(u_{*}(x) , |\nabla u_{*}|_{2}^{2}) \nabla u_{*} \nabla \varphi \,dx = \int_{\Omega}f(x, u_{*})\varphi \, dx,$$
whenever the integrals make sense.

\medskip

When the function $m$ does not depend on $u$ we have the classical problem
\begin{equation*}
\left \{ \begin{array}{ll}
-m\left( |\nabla u|_{2}^{2}\right)\Delta u= f(x, u) & \mbox{in $\Omega$,}\\
u=0 & \mbox{on $\partial\Omega$,}
\end{array}\right.
\end{equation*}
 which is the $N$-dimensional  version, in the stationary case, of the 
{\sl Kirchhoff equation} introduced in \cite{kirchhoff}. We do not list here the huge amount of papers concerning this equation.
On the other hand in many physical problems, rather then on $|\nabla u|_{2}$, the function $m$
depends on the unknown $u$, or even on quantities related to $u$, as its $L^{1}$-norm (see e.g. \cite{CR}).
\medskip

\medskip

Problem  \eqref{P}  studied in this paper  can be  considered as a slight generalization of
the Kirchhoff equation.
 It  can modelize the   following physical situation.
Consider an elastic membrane with shape $\Omega$ and fixed edge $\partial\Omega$, initially in rest.  Let $f$ be a given external force acting on $\Omega$ and $u(x)$ the transverse displacement at a point $x\in \Omega$, with respect to initial position, of the equilibrium solution. When the displacement is small, the number $(1/2)\int_{\Omega}|\nabla u|^{2}dx$ give us a good approximation of the variation of the superficial area of the membrane $\Omega$. What we are assuming here is that,
 the velocity of the displacement of the membrane
 is proportional to the gradient of the displacement with a factor $m$ depending not only
 on the variation of the superficial area of $\Omega$, but even on the same displacement:
$$
\textbf v=-m\left(u, |\nabla u|^{2}_{2} \right)\nabla u.
$$
Such a model is quite reasonable and, to the best of  our knowledge, it has not been considered before. 

Beside the physical interest, the equation is also challenging from a mathematical point of view.

\medskip

In this paper we will treat the situations in which
$f=f(x)$ and $f=f(x,u)$ with sublinear growth in $u$.
However
 our results will be stated and proved in the next sections; indeed to state our theorems
 (especially Theorem \ref{th:casof(xu)}) 
some preliminaries are needed.
Here we say that the function $m$ will satisfy 
 quite general assumptions. In particular, in contrast to the case in which
$m$  depend only on $|\nabla u|_{2},$ here there is no restriction in the growth at infinity 
of $m$ with respect to  $|\nabla u|_{2}^{2}$.

The main novelty of our approach is that the proofs are based on a simple ``change of variable'' device
which seems not to have been used for these kind of nonlocal equations. 
With the use of the change of variable, the equation \eqref{P} is reduced to a ``local'' semilinear equation,
for which various tools are available to solve it. 

For other type of change of variables in this type of problems, see also \cite{ACM,AA}.

\begin{remark}
Here, based on a result in \cite{BO}, we will assume a sublinear assumption on $f$
and we will use  topological tools. However, depending on other type of assumptions on the nonlinearity $f$,
other methods, for instance variational, can be  employed to solve the local equation in which the problem is transformed after the change of variable (and then recover a solution of the original equation).
\end{remark}

\begin{remark}
 We believe that change of variable of this type can be used also to deal with other nonlocal equations.
\end{remark}

The paper is organized as follows. 

In Section \ref{se:prelim} we 
present the general approach to solve problem \eqref{P},
we state the main assumptions  on $m$, we introduce its primitive $M$ 
 and give
some of its properties (see Lemma \ref{le}).
In Section \ref{sec:f(x)} we state and prove our main result in the case $f=f(x)$, i.e. Theorem \ref{th:main1}.
In Section \ref{sec:general} we consider the general case $f=f(x,u)$: actually we give a criterion (Theorem \ref{th:th})
which ensures the existence of a solution to problem \eqref{P}. Then two applications are given 
in Section \ref{sec:particulares}.

\medskip

\section{Assumptions on $m$  and a useful Lemma}\label{se:prelim}

Our approach to treat  problem \eqref{P}, in both cases $f=f(x)$ and $f=f(x,u)$, consists in the following  steps. 
Firstly, for every fixed $r\geq0$, we consider the auxiliary problem
\begin{equation*}
\left \{ \begin{array}{ll}
-\text{div}\left(m(u,r)\nabla u\right)= f(x, u) & \mbox{in $\Omega$,}\smallskip \\
u=0 & \mbox{on $\partial\Omega$,}
\end{array}\right.
\end{equation*}
associated to \eqref{P} for which the existence of a unique solution $u_{r}$
will be guaranteed.
Secondly, we show that the map $$S: r\mapsto \int_{\Omega}|\nabla u_{r}|^{2}dx$$ 
has a fixed point, which is of course a solution of the original problem \eqref{P}.

\medskip

Before to state our basic assumptions, let us introduce the following convention:
for every $r\geq0$, let us denote with $m_{r}$ the map
$$m_{r}: t\in\R\mapsto m(t,r) \in \R.$$

We  suppose that $m: \R\times [0,\infty)\to(0,\infty)$ is a function satisfying the following conditions: \medskip
%
%
%
%

\begin{enumerate}[label=(m\arabic*),ref=m\arabic*,start=0]
\item\label{m_{0}} $m: \R\times[0,+\infty)\to \mathbb (0,+\infty)$ is a continuous function;  \medskip
\item\label{m_{1}} there is $\mathfrak m >0$ such that $m(t, r)\geq \mathfrak m $ for all $t\in\R$ and $r\in[0,\infty)$;  
 
 \medskip
 
\item\label{m_{2}} for each $r\in [0,+\infty)$ the map 
$m_{r}:\R\to(0,+\infty)$ is strictly decreasing in $(-\infty,0)$  and strictly increasing in $(0,+\infty)$. \medskip 

\end{enumerate}
The class of functions satisfying the above assumptions is very large: 
the above conditions are satisfied, for example, by
$m(t,r)= t^{2}(r^{p}+1)+1$, with $p>0$, or 
$m(t,r)= t^{2}(e^{t^{2} e^{r}}+1)+1$ which, {\sl en passant}, achieve their minimum $\mathfrak m=1$
in points of type $(0,r).$

\begin{remark}
It would be interesting to see what happens in case $m$ might vanish.
\end{remark}

\medskip

Let us define the map $M(t, r):=\int_{0}^{t}m(s, r)ds$; we set also
$$M_{r}:=M(\cdot, r):\R\to \R.$$
As we have mentioned before, the result of this Section provides some properties of $M_{r}$ which will play an important role in the study of problem \eqref{P}.

\begin{lemma}\label{le}
Assume \eqref{m_{0}}-\eqref{m_{1}}. Then, \smallskip
\begin{enumerate}
  \item[$(a)$]  for each $r\in [0,\infty)$ the map $M_{r}:\R \to\R $ is a strictly increasing diffeomorphism;
  \medskip
    \medskip
   \item[$(b)$] for each $r\in [0,\infty),$ the inverse map $ M^{-1}_{r}$  is  Lipschitz continuous  with Lipschitz constant $\mathfrak m^{-1}$.
     In particular $|M_{r}^{-1}(s)|\leq \mathfrak m^{-1}|s|$ for all $s\in\R$.
 \end{enumerate}
 \noindent Assume now also \eqref{m_{2}}. Then
 
 \begin{enumerate}
  \item[$(c)$]   if $s_{n}\to s_{0}$ and $r_{n}\to r_{0}$ then $M_{r_{n}}^{-1}(s_{n})\to M_{r_{0}}^{-1}(s_{0})$;
 \medskip
      \item[$(d)$] for each $r\in[0,+\infty)$, the map $s\mapsto M_{r}^{-1}(s)/s$ is (continuous and) strictly decreasing in $(0,+\infty)$ and strictly increasing  in $(-\infty, 0)$. 
\end{enumerate}
\end{lemma}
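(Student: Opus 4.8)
Here is how I would approach the proof of Lemma \ref{le}.

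\medskip

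\textbf{Parts $(a)$ and $(b)$.} By \eqref{m_{0}} the map $s\mapsto m(s,r)$ is continuous, so $M_{r}(t)=\int_{0}^{t}m(s,r)\,ds$ is $C^{1}$ with $M_{r}'(t)=m(t,r)$. By \eqref{m_{1}} we have $M_{r}'(t)=m(t,r)\geq\mathfrak m>0$, so $M_{r}$ is strictly increasing; moreover $M_{r}(t)\geq\mathfrak m\, t$ for $t\geq0$ and $M_{r}(t)\leq\mathfrak m\, t$ for $t\leq0$, hence $M_{r}(t)\to\pm\infty$ as $t\to\pm\infty$. Being continuous, strictly increasing and surjective onto $\R$, $M_{r}$ is a strictly increasing diffeomorphism, which gives $(a)$. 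For $(b)$, writing $g=M_{r}^{-1}$, the inverse function theorem gives $g'(s)=1/M_{r}'(g(s))=1/m(g(s),r)\in(0,\mathfrak m^{-1}]$ by \eqref{m_{1}}; integrating, $|g(s_{1})-g(s_{2})|\leq\mathfrak m^{-1}|s_{1}-s_{2}|$, and taking $s_{2}=0$ (note $M_{r}(0)=0$ so $g(0)=0$) yields $|M_{r}^{-1}(s)|\leq\mathfrak m^{-1}|s|$.

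\medskip

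\textbf{Part $(c)$.} This is the continuity of $(s,r)\mapsto M_{r}^{-1}(s)$ jointly in both variables. I would first argue that $(t,r)\mapsto M_{r}(t)=\int_0^t m(s,r)\,ds$ is jointly continuous: continuity in $t$ is clear, and continuity in $r$ follows from \eqref{m_{0}} together with local uniform bounds on $m$ (on compact sets $m$ is bounded, so dominated convergence applies to the integral over a fixed interval). Now suppose $s_{n}\to s_{0}$, $r_{n}\to r_{0}$, and set $t_{n}=M_{r_{n}}^{-1}(s_{n})$. By $(b)$, $|t_{n}|\leq\mathfrak m^{-1}|s_{n}|$ is bounded, so along any subsequence $t_{n}\to\bar t$ for some $\bar t$. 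Then $s_{n}=M_{r_{n}}(t_{n})\to M_{r_{0}}(\bar t)$ by joint continuity, hence $M_{r_{0}}(\bar t)=s_{0}$, i.e. $\bar t=M_{r_{0}}^{-1}(s_{0})$. Since every subsequence has a further subsequence converging to the same limit, the whole sequence converges. (Note this actually only uses \eqref{m_{0}}-\eqref{m_{1}}; assumption \eqref{m_{2}} is not really needed here.)

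\medskip

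\textbf{Part $(d)$.} This is the part I expect to be the main obstacle, since it is where \eqref{m_{2}} genuinely enters. Fix $r$ and set $h(s)=M_{r}^{-1}(s)/s$ on $(0,\infty)$; I want $h$ strictly decreasing. Write $t=M_{r}^{-1}(s)$, so $s=M_{r}(t)$ and $h(s)=t/M_{r}(t)$; since $s\mapsto t$ is an increasing bijection of $(0,\infty)$ onto itself, it is equivalent to show $\varphi(t):=t/M_{r}(t)$ is strictly decreasing on $(0,\infty)$. Now $\varphi'(t)$ has the sign of $M_{r}(t)-t\,M_{r}'(t)=\int_{0}^{t}\big(m(\sigma,r)-m(t,r)\big)\,d\sigma$, and for $0<\sigma<t$ assumption \eqref{m_{2}} (strict monotonicity of $m_{r}$ on $(0,\infty)$) gives $m(\sigma,r)<m(t,r)$, so the integrand is strictly negative and $\varphi'(t)<0$. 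Hence $\varphi$, and therefore $h$, is strictly decreasing on $(0,\infty)$; continuity of $h$ is immediate from $(c)$ (or from $\varphi$ being $C^1$). The statement on $(-\infty,0)$ is entirely analogous, using that $m_{r}$ is strictly decreasing on $(-\infty,0)$: for $t<\sigma<0$ one gets $m(\sigma,r)>m(t,r)$, and a sign bookkeeping identical to the above (now $\int_0^t = -\int_t^0$) shows $\varphi$ is strictly increasing there.
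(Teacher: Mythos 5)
Your argument is correct in substance, and parts $(a)$, $(b)$, $(d)$ follow essentially the paper's own route: the paper proves $(b)$ by the direct estimate $M_{r}(t_{1})-M_{r}(t_{2})=\int_{t_{2}}^{t_{1}}m(\sigma,r)\,d\sigma\geq\mathfrak m(t_{1}-t_{2})$ rather than integrating $(M_{r}^{-1})'$, and proves $(d)$ by showing $\bigl(M_{r}(t)/t\bigr)'>0$ via $M_{r}(t)<m(t,r)t$, which is the reciprocal of your computation. Where you genuinely diverge is $(c)$: the paper introduces, via the Mean Value Theorem, the unique intermediate point $t_{r,s}$ with $M_{r}^{-1}(s)=s/m(t_{r,s},r)$ (uniqueness requiring \eqref{m_{2}}), extracts a convergent subsequence of $t_{r_{n},s_{n}}$, and identifies the limit through the implicit identity $s=M\bigl(s/m(t_{r,s},r),r\bigr)$. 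Your argument — joint continuity of $(t,r)\mapsto M_{r}(t)$, boundedness of $t_{n}=M_{r_{n}}^{-1}(s_{n})$ from $(b)$, and identification of subsequential limits by injectivity of $M_{r_{0}}$ — is simpler, and your observation that it needs only \eqref{m_{0}}--\eqref{m_{1}} is correct; the paper's placement of $(c)$ after \eqref{m_{2}} reflects only its choice of proof. (The auxiliary points $t_{r,s}$ and the bound \eqref{6a} are not wasted in the paper, though: they are set up in a form reused implicitly elsewhere.)

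One small correction in your part $(d)$: on $(-\infty,0)$ the inequality you state is reversed. Since $m_{r}$ is strictly \emph{decreasing} there, $t<\sigma<0$ gives $m(\sigma,r)<m(t,r)$, not $m(\sigma,r)>m(t,r)$. This is what you actually need: writing $M_{r}(t)-t\,m(t,r)=\int_{t}^{0}\bigl(m(t,r)-m(\sigma,r)\bigr)\,d\sigma>0$ for $t<0$ yields $\varphi'(t)>0$, hence $\varphi$ strictly increasing on $(-\infty,0)$ as claimed. With your inequality as written the integrand would have the wrong sign and you would conclude the opposite monotonicity, so the slip is worth fixing even though the intended argument is clearly the right one.
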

\begin{proof}
$(a)$ It is obvious. 

.

\noindent $(b)$ Fixed $r\geq0,s_{1},s_{2}\in \R$ and setting $t_{i} = M_{r}^{-1}(s_{i}), i=1,2$, we are reduced to prove
$$\mathfrak m |t_{1} - t_{2}| \leq |M_{r}(t_{1}) -M_{r}(t_{2})|.$$
Assume $t_{1}> t_{2}$. Then 
\begin{eqnarray*}
|M_{r}(t_{1} ) - M_{r}(t_{2}) | &=&M_{r}(t_{1} ) - M_{r}(t_{2}) \\ 
&=&\int_{t_{2}}^{t_{1}} m(\sigma, r)d\sigma \\
&\geq& \mathfrak m (t_{1} - t_{2}) = \mathfrak m|t_{1}- t_{2}|.
\end{eqnarray*}

\noindent $(c)$  Let us see first a simple fact.
Let $s>0$ be  fixed. By the Mean Value Theorem and \eqref{m_{2}}, for each $r\in [0,\infty)$, 
there is a unique $t_{r,s}$ between $0$ and $M_{r}^{-1}(s)$ 
such that
\begin{equation}\label{4a}
M_{r}^{-1}(s)=M_{r}^{-1}(s)-M_{r}^{-1}(0)=(M_{r}^{-1})'(M_{r}(t_{r,s}))(s-0)=\frac{s}{m(t_{r,s}, r)}.
\end{equation}
The unicity follows since $m_{r}$ is  strictly increasing in $(0,+\infty)$.
Equivalently, $t_{r,s}$ is the unique positive number satisfying
\begin{equation}\label{5a}
s=M\left(\frac{s}{m(t_{r,s}, r)}, r\right).
\end{equation}
Moreover by \eqref{m_{1}} and (\ref{4a}), it follows that 
\begin{equation}\label{6a}
t_{r,s}< M_{r}^{-1}(s)\leq \mathfrak m^{-1}s.
\end{equation} 

If $s<0$, using  that $m_{r}$ is strictly decreasing in $(-\infty,0)$ we conclude again the existence
of a unique $t_{r,s}$ between $M_{r}^{-1}(s)$ and $0$, which satisfy \eqref{5a}
and the inequalities in \eqref{6a} hold with absolute values.

\smallskip

Now, let $\{s_{n}\},\{r_{n}\}$ be  sequences  such that $r_{n}\to r_{0}\geq0, s_{n}\to s_{0}$.

If $s_{0}\neq0$, from \eqref{6a} (or \eqref{6a} with absolute values in case $s_{0}<0$), up to a subsequence, there is $t_{\ast}\in\R$ such that $t_{r_{n}, s_{n}}\to t_{\ast}$. 
Passing to the limit in $n$ in the identity
$$s_{n}=M\left(\frac{s_{n}}{m(t_{r_{n}, s_{n}}, r_{n})}, r_{n}\right)$$
(recall \eqref{5a})
 and using the continuity of $m$ and $M$, it follows that
$$
s_{0}=M\left(\frac{s_{0}}{m(t_{\ast}, r_{0})}, r_{0}\right).
$$
By the unicity we infer that $t_{\ast}=t_{r_{0},s_{0}}$. Consequently,
$$
M_{r_{0}}^{-1}(s_{0})=\frac{s_{0}}{m(t_{r_{0},s_{0}}, r_{0})},
$$
showing that
$$
M_{r_{n}}^{-1}(s_{n})=\frac{s_{n}}{m(t_{r_{n},s_{n}}, r_{n})}\longrightarrow\frac{s_{0}}{m(t_{r_{0},s_{0}}, r_{0})}=M_{r_{0}}^{-1}(s_{0}).
$$

If $s_{0}= 0,$ then from $(b)$ we have  $M_{r_{n}}^{-1}(s_{n})\to 0=M_{r_{0}}^{-1}(0)$.

%
\medskip

\noindent $(d)$ Let $r\geq0$ be fixed. Consider the case  $s_{1}>s_{2}>0$. Setting
$$ t_{i}:=M_{r}^{-1}(s_{i})>0,\quad i=1,2$$
 we are reduced to show that 
\begin{equation}\label{eq:monot}
\frac{M_{r}(t_{1})}{t_{1}}>\frac{M_{r}(t_{2})}{t_{2}}
\end{equation}
From \eqref{m_{2}} and
$$
M_{r}(t)=\int_{0}^{t}m(\sigma, r)d\sigma<m(t, r)\int_{0}^{t}d\sigma=m(t, r)t, 
$$
we deduce

$$
\left(\frac{M_{r}(t)}{t}\right)'=\frac{m(t, r)t-M_{r}(t)}{t^{2}}>0 \quad \forall  t>0,
$$
which implies \eqref{eq:monot}.

The case $s_{1}<s_{2}<0$ is treated similarly.
\end{proof}

\medskip

\section{The case $f(x, u)=f(x)$}\label{sec:f(x)}
In this section we address  the problem
\begin{equation}\label{NP}
\left \{ \begin{array}{ll}
-\text{div}\left (m(u, |\nabla u|_{2}^{2})\nabla u\right)= f(x) & \mbox{in $\Omega$,}\smallskip\\
u=0 & \mbox{on $\partial\Omega$,}
\end{array}\right.
\end{equation}
where $f\in L^{q}(\Omega), q>N/2$ and  $N\geq 2$.

We prove the following

\begin{theorem}\label{th:main1}
If \eqref{m_{0}}-\eqref{m_{2}} hold, $0\not\equiv f\in L^{q}(\Omega)$ and $q>N/2$,  
then problem \eqref{NP} has a nontrivial weak solution $u_{*}$.
\end{theorem}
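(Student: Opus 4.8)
The plan is to carry out the two-step scheme sketched in Section~\ref{se:prelim}, the engine being the change of variable $v=M_{r}(u)$ together with Lemma~\ref{le}.

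\textbf{Step 1: the auxiliary linear problem.} Fix $r\geq 0$ and look for a weak solution of $-\mathrm{div}(m(u,r)\nabla u)=f$ in $\Omega$, $u=0$ on $\partial\Omega$. Setting $v:=M_{r}(u)$, so that $u=M_{r}^{-1}(v)$ and, by the chain rule for Sobolev functions (legitimate since $M_{r}^{-1}$ is Lipschitz by Lemma~\ref{le}$(b)$ and $M_{r}^{-1}(0)=0$), $\nabla v=m(u,r)\nabla u$, the equation turns into the classical problem $-\Delta v=f$ in $\Omega$, $v=0$ on $\partial\Omega$. This admits a unique weak solution $v_{0}\in H_{0}^{1}(\Omega)$, and since $f\in L^{q}(\Omega)$ with $q>N/2$, standard elliptic regularity (Stampacchia's $L^{\infty}$ estimates) gives $v_{0}\in L^{\infty}(\Omega)$; moreover $v_{0}\not\equiv 0$ because $f\not\equiv 0$. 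Crucially $v_{0}$ does not depend on $r$. Going back, for each $r\geq 0$ I define $u_{r}:=M_{r}^{-1}(v_{0})$: Lemma~\ref{le}$(b)$ yields $u_{r}\in H_{0}^{1}(\Omega)\cap L^{\infty}(\Omega)$ with $|u_{r}|\leq \mathfrak m^{-1}|v_{0}|$, and reversing the computation ($m(u_{r},r)\nabla u_{r}=\nabla v_{0}$) shows that $u_{r}$ is the unique weak solution of the auxiliary problem.

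\textbf{Step 2: the fixed point.} Define $S(r):=|\nabla u_{r}|_{2}^{2}$. From $\nabla u_{r}=\nabla v_{0}/m(u_{r},r)$ and \eqref{m_{1}} we get the pointwise bound $|\nabla u_{r}|^{2}\leq |\nabla v_{0}|^{2}/\mathfrak m^{2}$, hence
$$0\leq S(r)=\int_{\Omega}\frac{|\nabla v_{0}|^{2}}{m(u_{r},r)^{2}}\,dx\leq \frac{1}{\mathfrak m^{2}}|\nabla v_{0}|_{2}^{2}=:R,\qquad r\geq 0,$$
so $S$ maps $[0,R]$ into itself. Continuity of $S$ on $[0,\infty)$ is exactly where Lemma~\ref{le}$(c)$ is used: if $r_{n}\to r_{0}$, then $M_{r_{n}}^{-1}(v_{0}(x))\to M_{r_{0}}^{-1}(v_{0}(x))$ for a.e.\ $x$, whence by continuity of $m$ the integrand defining $S(r_{n})$ converges a.e.\ to that of $S(r_{0})$ while staying dominated by the fixed $L^{1}$ function $|\nabla v_{0}|^{2}/\mathfrak m^{2}$; dominated convergence gives $S(r_{n})\to S(r_{0})$. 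Applying the intermediate value theorem to $r\mapsto S(r)-r$ on $[0,R]$ (it is $\geq 0$ at $r=0$ and $\leq 0$ at $r=R$) produces a fixed point $r_{*}\in[0,R]$, i.e.\ $|\nabla u_{r_{*}}|_{2}^{2}=r_{*}$. Then $m(u_{r_{*}},r_{*})=m(u_{r_{*}},|\nabla u_{r_{*}}|_{2}^{2})$, so $u_{*}:=u_{r_{*}}$ is a weak solution of \eqref{NP}; it is nontrivial since $u_{*}=M_{r_{*}}^{-1}(v_{0})$ with $v_{0}\not\equiv 0$ and $M_{r_{*}}^{-1}$ injective, $M_{r_{*}}^{-1}(0)=0$.

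\textbf{Where the difficulty lies.} The conceptual content sits entirely in Lemma~\ref{le}; what remains is technical but must be handled with care: justifying the change of variable at the level of $H_{0}^{1}(\Omega)\cap L^{\infty}(\Omega)$ (chain rule for Lipschitz compositions and the equivalence of the two weak formulations), and the $L^{\infty}$-regularity of $v_{0}$, which is what allows one to make sense of $m(u_{*},|\nabla u_{*}|_{2}^{2})$ and of the weak formulation of \eqref{NP} — this is precisely the role of the hypothesis $q>N/2$. The self-map property and the continuity of $S$, by contrast, are quantitatively controlled by the uniform lower bound $\mathfrak m$ and pose no serious obstacle.
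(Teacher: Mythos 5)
Your proof is correct and follows essentially the same route as the paper: the change of variable $v=M_{r}(u)$ reducing \eqref{Pr} to the $r$-independent linear problem \eqref{LP}, the definition $u_{r}=M_{r}^{-1}(v)$, the uniform bound $S(r)\leq \mathfrak m^{-2}|\nabla v|_{2}^{2}$, continuity of $S$ via Lemma \ref{le}$(c)$ and dominated convergence, and a fixed point of $S$ yielding the solution. The only cosmetic difference is that you invoke the intermediate value theorem on $[0,R]$ explicitly, whereas the paper phrases the same argument as $S(0)>0$ together with $S(r)<r$ for large $r$.
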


Let us consider for every $r\geq0$ the problem
\begin{equation}\label{Pr}\tag{$P_{r}$}
\left \{ \begin{array}{ll}
-\text{div}\left(m_{r}(u)\nabla u\right)= f(x) & \mbox{in $\Omega$,}\smallskip \\
u=0 & \mbox{on $\partial\Omega$,}
\end{array}\right.
\end{equation}
 whose weak solution is, by definition, 
 a function $u_{r}\in H_{0}^{1}(\Omega)\cap L^{\infty}(\Omega)$ satisfying
$$
\forall \varphi\in H_{0}^{1}(\Omega):\quad \int_{\Omega}m_{r}(u_{r})\nabla u_{r}\nabla \varphi dx=\int_{\Omega}f(x)\varphi dx.
$$
 By our assumptions the equality above makes sense.

We observe that $u\in H^{1}_{0}(\Omega)\cap L^{\infty}(\Omega)$ solves
 \eqref{Pr} if and only if   $v:=M_{r}(u)\in H^{1}_{0}(\Omega)\cap L^{\infty}(\Omega)$  satisfies
\begin{equation}\label{LP}
\left \{ \begin{array}{ll}
-\Delta v= f(x) & \mbox{in $\Omega$,} \smallskip \\
v=0 & \mbox{on $\partial\Omega$,}
\end{array}\right.
\end{equation}
in a weak sense.
Note also that $v$ does not depend on $r$, due to the unicity of the solution of \eqref{LP}.
Since $f\in L^{q}(\Omega)$ with $q>N/2$, from Sobolev embeddings, the weak solution $v$ of \eqref{LP} belongs to $C(\overline{\Omega})$.
Then $u_{r}:=M_{r}^{-1}(v)$ is a weak solution of \eqref{Pr} and belongs to $C(\overline \Omega)$,
being composition of continuous functions.


The next step in the study of (\ref{NP}) is given by the next proposition.

\begin{proposition}\label{prop}
If \eqref{m_{0}}-\eqref{m_{2}} hold and $0\not\equiv f\in L^{q}(\Omega), q>N/2$, 
then for each $r\geq 0$, the auxiliary problem (\ref{Pr})  has a unique nontrivial weak solution $u_{r}$. Moreover, 
the map
 $$S: r\in[0,+\infty)\longmapsto \int_{\Omega}|\nabla u_{r}|^{2}dx\in \mathbb [0,+\infty)$$ is continuous.
\end{proposition}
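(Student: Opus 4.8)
The plan is to establish the two assertions of Proposition~\ref{prop} separately, relying on the reduction to the linear problem \eqref{LP} and on Lemma~\ref{le}. For the first assertion, recall that the unique weak solution $v \in H_0^1(\Omega)\cap C(\overline\Omega)$ of \eqref{LP} exists by Lax--Milgram together with the elliptic regularity coming from $f\in L^q(\Omega)$, $q>N/2$. Since $M_r$ is a strictly increasing diffeomorphism of $\R$ with Lipschitz inverse (Lemma~\ref{le}$(a)$--$(b)$), the function $u_r:=M_r^{-1}(v)$ is well-defined, lies in $C(\overline\Omega)$ (composition of continuous maps) and in $H_0^1(\Omega)$ (the chain rule applies because $M_r^{-1}$ is Lipschitz, and $u_r=0$ on $\partial\Omega$ since $v=0$ there and $M_r^{-1}(0)=0$). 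One then checks directly that $u_r$ solves \eqref{Pr}: formally $\nabla v = m_r(u_r)\nabla u_r$, so $\int_\Omega m_r(u_r)\nabla u_r\nabla\varphi = \int_\Omega \nabla v\nabla\varphi = \int_\Omega f\varphi$; this chain-rule identity for $v=M_r(u_r)$ with $M_r\in C^1$ and $u_r\in H_0^1\cap L^\infty$ is standard. Conversely, if $u$ solves \eqref{Pr} then $M_r(u)$ solves \eqref{LP}, and uniqueness for \eqref{LP} forces $M_r(u)=v$, hence $u=u_r$; this gives uniqueness. Nontriviality: if $u_r\equiv 0$ then $v=M_r(0)=0$, contradicting $f\not\equiv 0$.

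For the continuity of $S$, fix $r_0\ge 0$ and a sequence $r_n\to r_0$. The key point is that the linear profile $v$ is the \emph{same} for all $r$, so $u_{r_n}=M_{r_n}^{-1}(v)$ and $u_{r_0}=M_{r_0}^{-1}(v)$ differ only through the parameter in $M^{-1}$. I would show $\nabla u_{r_n}\to \nabla u_{r_0}$ in $L^2(\Omega)^N$, which immediately yields $S(r_n)\to S(r_0)$. Pointwise a.e.\ convergence $\nabla u_{r_n}(x)\to\nabla u_{r_0}(x)$ follows from the explicit formula $\nabla u_r = \nabla v/m(u_r, r)$ together with $u_{r_n}(x)=M_{r_n}^{-1}(v(x))\to M_{r_0}^{-1}(v(x))=u_{r_0}(x)$ (this is exactly Lemma~\ref{le}$(c)$, applied with $s_n=s_0=v(x)$) and the continuity of $m$ and $\mathfrak m\le m$. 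For the passage to the limit in the integrals I would use dominated convergence: from Lemma~\ref{le}$(b)$, $|\nabla u_r| = |\nabla v|/m(u_r,r) \le \mathfrak m^{-1}|\nabla v| \in L^2(\Omega)$ uniformly in $r$, so $|\nabla u_{r_n}|^2$ is dominated by the fixed integrable function $\mathfrak m^{-2}|\nabla v|^2$, and $\int_\Omega |\nabla u_{r_n}|^2\,dx \to \int_\Omega |\nabla u_{r_0}|^2\,dx$. Note this argument uses $(c)$, hence assumption \eqref{m_{2}}, consistently with the hypotheses of the Proposition.

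The main technical obstacle, to my mind, is the rigorous justification of the equivalence ``$u$ solves \eqref{Pr} $\iff$ $M_r(u)$ solves \eqref{LP}'' at the level of weak solutions, i.e.\ the chain rule $\nabla(M_r(u)) = m_r(u)\nabla u$ in $H_0^1(\Omega)$ when $u\in H_0^1(\Omega)\cap L^\infty(\Omega)$ and $M_r\in C^1$. This is classical (Stampacchia-type results on composition of Sobolev functions with $C^1$, or more simply Lipschitz-on-bounded-sets, functions), but it must be invoked carefully, in particular to pass test functions back and forth between the two formulations: given $\varphi\in H_0^1(\Omega)$ one uses it directly in \eqref{LP}, and conversely the test functions for \eqref{Pr} are exactly those of $H_0^1(\Omega)$, so no change-of-test-function subtlety arises beyond the chain rule itself. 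Everything else is bookkeeping: existence/regularity for \eqref{LP} is off-the-shelf, and the continuity of $S$ reduces, as above, to Lemma~\ref{le}$(b)$--$(c)$ plus dominated convergence.
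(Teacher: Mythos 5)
Your proposal is correct and follows essentially the same route as the paper: reduce \eqref{Pr} to the fixed linear problem \eqref{LP} via $v=M_r(u)$, obtain $u_r=M_r^{-1}(v)$ with uniqueness inherited from \eqref{LP}, and prove continuity of $S$ by writing $\nabla u_r=\nabla v/m(u_r,r)$, invoking Lemma~\ref{le}$(c)$ for pointwise convergence and the uniform bound $\mathfrak m^{-2}|\nabla v|^2$ for dominated convergence. Your extra care about the Sobolev chain rule for $M_r^{-1}$ is a reasonable elaboration of a step the paper treats as immediate, not a different argument.
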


\begin{proof}
As we have seen, the unicity of the solution of \eqref{Pr} is a consequence of the unicity of the solution of \eqref{LP}:
we just need to show the second statement in the Proposition.
Let $\{r_{n}\}$ be a sequence of nonnegative numbers such that $r_{n}\to r_{0}\geq0$.
Setting $u_{n}:=u_{r_{n}}$, it holds
$$
 \forall  n\in\mathbb N: \int_{\Omega}|\nabla u_{n}|^{2}dx=\int_{\Omega}\frac{1}{[m_{r_{n}}(u_{n})]^{2}}|\nabla v|^{2}dx,
$$
where   $v$ is the solution of \eqref{LP}. From Lemma \ref{le} c) and the continuity of $m$, we have pointwise in $\Omega$
\begin{eqnarray*}
u_{n}(x)=M_{r_{n}}^{-1}(v(x))&\longrightarrow &M_{r_{0}}^{-1}(v(x))=u_{r_{0}}(x) \\
\frac{1}{[m_{r_{n}}(u_{n})]^{2}}|\nabla v|^{2}&\longrightarrow& \frac{1}{[m_{r_{0}}(u_{r_{0}})]^{2}}|\nabla v|^{2} \ \ \ \mbox{ a.e. in }\Omega.
\end{eqnarray*}
On the other hand,  by \eqref{m_{1}}
\begin{equation*}\label{8}
\frac{1}{[m_{r_{n}}(u_{n})]^{2}}|\nabla v|^{2}\leq \frac{1}{\mathfrak m^{2}}|\nabla v|^{2} \  \ \ \mbox{ a.e. in } \Omega.
\end{equation*}
We deduce by 
the Dominated Convergence Theorem that
$$
\int_{\Omega}|\nabla u_{n}|^{2}dx=\int_{\Omega}\frac{1}{[m_{r_{n}}(u_{n})]^{2}}|\nabla v|^{2}dx\longrightarrow \int_{\Omega}\frac{1}{[m_{r_{0}}(u_{r_{0}})]^{2}}|\nabla v|^{2}dx=\int_{\Omega}|\nabla u_{r_{0}}|^{2}dx,
$$
concluding the proof.
\end{proof}

{\sl Proof  of Theorem \ref{th:main1}.}
The idea is to show that 
the continuous map  $S$ defined in Proposition \ref{prop}
  has a fixed point. 

Clearly, being $u_{0}$ a nontrivial solution of (\ref{Pr}) with $r=0$, it holds
\begin{equation*}\label{9}
S(0)=\int_{\Omega}|\nabla u_{0}|^{2}dx>0.
\end{equation*}
Introducing the map $T:[0,\infty)\to[0,\infty)$ given by $$T(r):=\int_{\Omega}m(u_{r}, r)|\nabla u_{r}|^{2}dx\equiv
\int_{\Omega}\frac{1}{m(u_{r}, r)}|\nabla v|^{2}dx,$$
it is 
$$
 \forall r\geq 0: \ \  S(r)\leq \frac{1}{\mathfrak m}T(r)\leq \frac{1}{\mathfrak m^{2}}\int_{\Omega}|\nabla v|^{2}dx.
$$
In particular there is $R>0$ such that
\begin{equation*}\label{10}
 \forall r\geq R:  \ \ S(r)<r.
\end{equation*}
The above considerations allow us to conclude that $S$ has a fixed point $r_{*}>0$
and then, for $u_{*} := u_{r_{*}}$, we have
$$
r_{\ast}=S(r_{\ast})=\int_{\Omega}|\nabla u_{\ast}|^{2}dx.
$$
In other words,  $u_{*}$ is  a solution of  \eqref{NP}.
\begin{remark}
Observe that we have showed that the map $S$ is bounded.
\end{remark}

\medskip

\section{The general case $f=f(x,u)$}\label{sec:general}
Throughout this section, $f:\Omega\times \R\to\R$ is a  function satisfying the following  condition
\begin{enumerate}[label=(C),ref=C,start=0]
\item\label{C} $f$ is a Carath\'eodory function and
$$\exists c>0 : \quad |f(x, t)|\leq c(1+|t|^{p}), \quad (x, t)\in \Omega\times\R, $$
where $1<p<2^{\ast}-1$ and $2^{*}=2N/(N-2).$
\end{enumerate}

As in the previous Section we begin by introducing, for fixed $r\geq0$, the auxiliary problem
\begin{equation}\label{P'r}\tag{$P'_{r}$}
\left \{ \begin{array}{ll}
-\text{div}\left(m_{r}(u)\nabla u\right)= f(x,u) & \mbox{in $\Omega$,}\smallskip \\ 
u=0 & \mbox{on $\partial\Omega$,}
\end{array}\right.
\end{equation}
associated to (\ref{P}). 

Considering the same change of variable $v=M_{r}(u)$, we see that $u$ solves \eqref{P'r}
if and only if $v$ satisfies the following semilinear problem
\begin{equation}\label{SP}\tag{$SP_{r}$}
\left \{ \begin{array}{ll}
-\Delta v= h_{r}(x, v) & \mbox{in $\Omega$,} \smallskip\\
v=0 & \mbox{on $\partial\Omega$,}
\end{array}\right.
\end{equation}
where $$h_{r}(x, t):=f(x, M_{r}^{-1}(t)).$$
Of course, a weak solution of \eqref{SP} is a function $v\in H_{0}^{1}(\Omega)\cap L^{\infty}(\Omega)$ such that
$$
 \forall  \varphi\in H_{0}^{1}(\Omega): \ \int_{\Omega}\nabla v\nabla \varphi dx=\int_{\Omega}h_{r}(x, v)\varphi dx.
$$
In view of Lemma \ref{le} $(b)$ and \eqref{C}, the right hand side above makes sense.

Now we state the criterium anticipated in the Introduction which give us a  sufficient condition 
for the existence of solutions for problem \eqref{P}. 

\begin{theorem}\label{th:th}
Let $m$ and $f$ satisfy \eqref{m_{0}}-\eqref{m_{2}} and \eqref{C}, respectively. Suppose, additionally,
that for each $r\geq 0$, problem \eqref{SP} has a unique nontrivial weak solution $v_{r}\in H^{1}_{0}(\Omega)\cap L^{\infty}(\Omega)$ and the 
map 
 $$V: r\in[0,+\infty)\longmapsto \int_{\Omega}|\nabla v_{r}|^{2}dx\in \mathbb [0,+\infty)$$ 
is in $L^{\infty}([0,+\infty))$. Then \eqref{P} possesses a nontrivial weak solution.
\end{theorem}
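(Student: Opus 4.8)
The plan is to mimic the strategy used for Theorem \ref{th:main1}, replacing the linear problem \eqref{LP} by the semilinear problem \eqref{SP} and using the hypotheses as a black box. First I would observe that, exactly as in Section \ref{sec:f(x)}, the change of variable $v=M_{r}(u)$ sets up a bijection between weak solutions $u_{r}\in H^{1}_{0}(\Omega)\cap L^{\infty}(\Omega)$ of \eqref{P'r} and weak solutions $v_{r}$ of \eqref{SP}, with $u_{r}=M_{r}^{-1}(v_{r})$; by hypothesis each \eqref{SP} has a unique nontrivial solution $v_{r}$, hence each \eqref{P'r} has a unique nontrivial solution $u_{r}$. Then I would introduce the map
$$S:r\in[0,\infty)\longmapsto \int_{\Omega}|\nabla u_{r}|^{2}\,dx\in[0,\infty),$$
and note that a fixed point $r_{*}$ of $S$ gives a solution $u_{*}:=u_{r_{*}}$ of \eqref{P}, since then $r_{*}=\int_{\Omega}|\nabla u_{*}|^{2}\,dx$ so that $m_{r_{*}}(u_{*})=m(u_{*},|\nabla u_{*}|_{2}^{2})$.

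The existence of the fixed point will again be obtained from a boundedness estimate plus continuity, via the elementary one-dimensional fixed point argument already used (if $S$ is continuous, $S(0)>0$ is not needed, but $S(r)<r$ for $r$ large forces a fixed point in $(0,\infty)$ by the intermediate value theorem applied to $r\mapsto S(r)-r$; more precisely one uses $S$ continuous, $S\ge0$, and $S$ bounded). For the bound, I would relate $S$ to $V$: using $\nabla u_{r}=\nabla(M_{r}^{-1}(v_{r}))=(m_{r}(u_{r}))^{-1}\nabla v_{r}$ and assumption \eqref{m_{1}},
$$S(r)=\int_{\Omega}\frac{1}{[m_{r}(u_{r})]^{2}}|\nabla v_{r}|^{2}\,dx\leq \frac{1}{\mathfrak m^{2}}\int_{\Omega}|\nabla v_{r}|^{2}\,dx=\frac{1}{\mathfrak m^{2}}V(r)\leq \frac{1}{\mathfrak m^{2}}\|V\|_{L^{\infty}},$$
so $S$ is bounded, and therefore $S(r)<r$ for all $r\geq R:=1+\mathfrak m^{-2}\|V\|_{L^{\infty}}$.

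The main obstacle is the continuity of $S$, because, unlike in Proposition \ref{prop}, the function $v_{r}$ now genuinely depends on $r$ (the right-hand side $h_{r}(x,t)=f(x,M_{r}^{-1}(t))$ varies with $r$), so one cannot simply freeze $v$ and apply dominated convergence. I would handle this by taking $r_{n}\to r_{0}$ and first establishing that $v_{n}:=v_{r_{n}}$ is bounded in $H^{1}_{0}(\Omega)$: testing \eqref{SP} with $v_{n}$, using \eqref{C} together with Lemma \ref{le}$(b)$ (which gives $|M_{r_{n}}^{-1}(t)|\leq \mathfrak m^{-1}|t|$, hence $|h_{r_{n}}(x,v_{n})|\leq c(1+\mathfrak m^{-p}|v_{n}|^{p})$ with $p<2^{*}-1$) and Sobolev, one gets a uniform bound and, in fact, a uniform $L^{\infty}$ bound via standard Brezis–Kato/Moser iteration for $-\Delta v_{n}=h_{r_{n}}(x,v_{n})$. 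Passing to a subsequence, $v_{n}\rightharpoonup v_{*}$ in $H^{1}_{0}(\Omega)$ and $v_{n}\to v_{*}$ in $L^{2}(\Omega)$ and a.e.; using Lemma \ref{le}$(c)$ (continuity of $(s,r)\mapsto M_{r}^{-1}(s)$) and the continuity of $f$ in the second variable, $h_{r_{n}}(x,v_{n})\to f(x,M_{r_{0}}^{-1}(v_{*}))$ a.e., and with the uniform bounds one passes to the limit in the weak formulation to conclude that $v_{*}$ is a nontrivial weak solution of \eqref{SP} with $r=r_{0}$; by the assumed uniqueness $v_{*}=v_{r_{0}}$, and a subsequence argument shows the full sequence converges. (One should also check $v_{*}\not\equiv0$; this is where one must rule out the trivial solution — typically it follows because the convergence is strong enough that $\|v_{n}\|$ cannot collapse to $0$, or one invokes that $0$ is the only trivial solution and the problem's structure; if $f(x,0)\not\equiv0$ this is immediate, otherwise it needs the implicit nondegeneracy built into the uniqueness hypothesis.) Finally, from $v_{n}\to v_{r_{0}}$ strongly in $H^{1}_{0}(\Omega)$ and a.e., together with Lemma \ref{le}$(c)$, the continuity of $m$, and the a.e. domination $[m_{r_{n}}(u_{n})]^{-2}|\nabla v_{n}|^{2}\leq \mathfrak m^{-2}|\nabla v_{n}|^{2}$ with $|\nabla v_{n}|^{2}\to|\nabla v_{r_{0}}|^{2}$ in $L^{1}$, a generalized dominated convergence theorem yields $S(r_{n})\to S(r_{0})$, establishing continuity of $S$ and completing the proof.
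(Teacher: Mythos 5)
Your proposal follows essentially the same route as the paper's proof: reduce to \eqref{SP} via $v=M_{r}(u)$, bound $S(r)\le \mathfrak m^{-2}V(r)$ to force $S(r)<r$ for large $r$, and prove continuity of $S$ at $r_{0}$ by extracting a convergent subsequence of $v_{r_{n}}$, identifying its limit as $v_{r_{0}}$ through the uniqueness hypothesis, and finishing with dominated convergence (the paper obtains the strong $H^{1}_{0}$ convergence explicitly by testing the equations with $v$ and with $v_{n}$, a step you assert but do not detail). One slip: your claim that the uniform $H^{1}_{0}$ bound on $v_{n}$ follows from testing \eqref{SP} with $v_{n}$ together with \eqref{C} and Sobolev is false, since $p>1$ only yields $\|v_{n}\|^{2}\le C\bigl(\|v_{n}\|+\|v_{n}\|^{p+1}\bigr)$, which gives no bound; the needed bound is precisely the hypothesis $V\in L^{\infty}([0,+\infty))$, which both you (earlier in your argument) and the paper invoke directly, so the proof is unaffected. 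Your remark that one must rule out $v_{*}\equiv 0$ before applying the uniqueness of the \emph{nontrivial} solution is a fair point which the paper's own proof also leaves implicit.
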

\begin{proof}
By assumptions, for each $r\geq 0$, the auxiliary problem \eqref{P'r} has a unique nontrivial solution 
$u_{r}=M^{-1}_{r}(v_{r})\in H^{1}_{0}(\Omega)\cap L^{\infty}(\Omega)$. 
The idea is to show that the map
 $$T: r\in[0,+\infty)\longmapsto \int_{\Omega}|\nabla u_{r}|^{2}dx\in \mathbb [0,+\infty)$$ 
has a fixed point $u_{*}$, which will be a solution of  \eqref{P}.
Let us begin by proving that  $T$ is continuous.

Let $\{r_{n}\}$ be a sequence of nonnegative numbers such that $r_{n}\to r_{0}\geq0$. Then, setting for brevity
$u_{n}:=u_{r_{n}}$ and $v_{n}:=v_{r_{n}}$, 
the boundedness of $V$ implies that 
 $\{v_{n}\}$ is bounded in $H_{0}^{1}(\Omega)$. Then
there exists $v\in H^{1}_{0}(\Omega)$, such that, up to subsequences,
\begin{equation*}\label{4}
v_{n}\rightharpoonup v \ \text{ in } H^{1}_{0}(\Omega) \quad\text{and } \quad v_{n}\to v \ \mbox{ a.e. in $\Omega$}
\end{equation*}
and, for some   $g\in L^{p}(\Omega)$,
\begin{equation*}\label{6}
|v_{n}(x)|\leq g(x) \ \mbox{ a.e. in $\Omega$}.
\end{equation*}
Consequently, passing to the limit in $n$ in the identity
$$
\int_{\Omega}\nabla v_{n}\nabla v dx=\int_{\Omega}h_{n}(x, v_{n})v dx=\int_{\Omega}f(x, M_{r_{n}}^{-1}(v_{n}))v dx,
$$
where $h_{n}:=h_{r_{n}}$, and using \eqref{C} and  Lemma \ref{le} $(b), (c)$, by the Dominated Convergence Theorem, we obtain
\begin{equation}\label{7}
\int_{\Omega}|\nabla v|^{2} dx=\int_{\Omega}f(x, M_{r_{0}}^{-1}(v))v dx.
\end{equation}
Similarly, passing to the limit in $n$ in
$$
\int_{\Omega}|\nabla v_{n}|^{2}dx=\int_{\Omega}h_{n}(x, v_{n})v_{n} dx=\int_{\Omega}f(x, M_{r_{n}}^{-1}(v_{n}))v_{n} dx,
$$
it follows that
\begin{equation}\label{8}
\int_{\Omega}|\nabla v_{n}|^{2} dx\longrightarrow \int_{\Omega}f(x, M_{r_{0}}^{-1}(v))v dx.
\end{equation}
From (\ref{7}) and (\ref{8}), we conclude that
\begin{equation*}\label{9}
\int_{\Omega}|\nabla v_{n}|^{2} dx\longrightarrow \int_{\Omega}|\nabla v|^{2} dx.
\end{equation*}
Then $v_{n} \to v$ in $H^{1}_{0}(\Omega)$.
Then up to a subsequence,
\begin{equation*}\label{10}
|\nabla v_{n}(x)|^{2}\to |\nabla v(x)|^{2} \quad \mbox{ a.e. in $\Omega$}
\end{equation*}
and  for some $\tilde{g}\in L^{1}(\Omega)$,
\begin{equation*}\label{11}
|\nabla v_{n}(x)|^{2}\leq \tilde{g}(x) \quad \mbox{ a.e. in $\Omega$},
\end{equation*}
so that, 
\begin{equation}\label{14}
\frac{1}{[m(u_{n}, r_{n})]^{2}}|\nabla v_{n}|^{2}\leq \frac{1}{\mathfrak m^{2}}\tilde{g}(x) \quad \mbox{ a.e. in $\Omega$}.
\end{equation}

Finally, from Lemma \ref{le} $(c)$, we have 
\begin{equation*}\label{12}
u_{n}(x)=M_{r_{n}}^{-1}(v_{n}(x))\longrightarrow M_{r_{0}}^{-1}(v(x)) \quad \mbox{ a.e. in $\Omega$}.
\end{equation*}

As before,
passing to the limit in $n$ in the identity
$$\int_{\Omega} \nabla v_{n} \nabla \varphi\, dx= \int_{\Omega} h_{n}(x,v_{n})\varphi \,dx =\int_{\Omega}f(x, M_{r_{n}}^{-1}(v_{n}))\varphi \,dx,$$
we have
$$\int_{\Omega} \nabla v \nabla \varphi \,dx= \int_{\Omega}f(x, M_{r_{0}}^{-1}(v))\varphi \,dx ,$$
which implies, by uniqueness of the solution, that $M_{r_{0}}^{-1}(v(x))=u_{r_{0}}(x)$
and so $u_{n} \to u_{r_{0}}$ a.e. in $\Omega$.
Then 
\begin{equation}\label{13}
\frac{1}{[m(u_{n}, r_{n})]^{2}}|\nabla v_{n}|^{2}\longrightarrow \frac{1}{[m(u_{r_{0}}, r_{0})]^{2}}|\nabla v|^{2} \quad \mbox{ a.e. in $\Omega$}
\end{equation}
and  from (\ref{13}), (\ref{14}) and the Dominated Convergence Theorem, we get
$$
\int_{\Omega}|\nabla u_{n}|^{2}dx=\int_{\Omega}\frac{1}{[m(u_{n}, r_{n})]^{2}}|\nabla v_{n}|^{2}dx\longrightarrow \int_{\Omega}\frac{1}{[m(u_{r_{0}}, r_{0})]^{2}}|\nabla v|^{2}dx=\int_{\Omega}|\nabla u_{r_{0}}|^{2}dx,
$$
proving the continuity of $T$.

\medskip

To prove that $T$ has a fixed point, we observe that $T(0)=\int_{\Omega}|\nabla u_{0}|^{2}dx>0$ and, being $r\mapsto \int_{\Omega}|\nabla v_{r}|^{2}dx$ bounded,  there exists $R>0$ such that
$$
\forall r\geq R:\quad T(r)=\int_{\Omega}\frac{1}{[m(u_{r}, r)]^{2}}|\nabla v_{r}|^{2}dx\leq \frac{1}{\mathfrak m^{2}}\int_{\Omega}|\nabla v_{r}|^{2}dx\leq r.
$$
Then the existence of a fixed point is guaranteed.
\end{proof}

\begin{remark}
Of course the main ingredients (and difficulties) in  Theorem \ref{th:th} are the existence of a unique solution
to the auxiliary problem \eqref{SP} as well as the {\sl a priori} bound of the solutions with respect to $r$.

However the hypothesis on the unicity of the solution to problem \eqref{SP} is not strictly necessary,
and actually not satisfied in many semilinear elliptic problems.
All that we need is  the existence of a fixed point for the map $T$ which can be achieved, e.g., in the following case.

Assume that, even though problem \eqref{SP} has not a unique solution, it is possible to
choose continuously in $r$ the solution $v_{r}$ of  \eqref{SP}, so that we can define a {\em continuos} branch of solutions and the map $V:r\mapsto \int_{\Omega} |\nabla v_{r} |^{2}dx$ is continuous.
In this case, being the map $r\mapsto v_{r}\in H^{1}_{0}(\Omega)$ continuous, it is easy to see, using Lemma \ref{le} (c)
that also $r\mapsto u_{r}=M_{r}^{-1}(v_{r})\in H^{1}_{0}(\Omega)$, and hence $T$, is continuous. The existence of a fixed point for $T$ 
is then guaranteed if one can prove that  $\lim_{r\to+\infty}V(r)/r < \mathfrak m^{2}.$
\end{remark}

\section{Two particular cases}\label{sec:particulares}

As we  mentioned before the goal of this section is to present some particular examples of problems covered by  Theorem \ref{th:th}. Certainly, Theorem \ref{th:th} covers a range of different situations, however, we limit ourselves to consider two special cases.

\subsection{First case}
Under assumptions  \eqref{m_{0}}-\eqref{m_{2}} on $m$, let
 $f:\Omega\times [0,+\infty)\to\R$ be a  function which  satisfies the following conditions: 

\begin{enumerate}[label=(f\arabic*),ref=f\arabic*,start=1]
\item\label{f_1} there exists $c>0$ such that 
  $$
  0\leq f(x, t)\leq c(1+t) \ \mbox{ a.e. in $\Omega$ and $\forall t\in [0,+\infty)$},
  $$ 
  \end{enumerate}
\begin{enumerate}[label=(f\arabic*),ref=f\arabic*,start=2]
\item\label{f_{2}}  for a.e. $x\in\Omega$, the function $t\mapsto f(x, t)$ is continuous on $[0,+\infty)$,  \medskip
\item\label{f_{3}}  for each $t\geq 0$, the function $x\mapsto f(x, t)$ belongs to $L^{\infty}(\Omega)$,\medskip

\item\label{f_{4}} for a.e. $x\in\Omega$, the function $t\mapsto f(x, t)/t$ is decreasing on $(0,+\infty)$.   
\end{enumerate}

\medskip

In what follows we 
  extend $f$ on the negative numbers by giving the constant value $f(x,0)$. This extension,
denoted again with $f$,
satisfies obviously \eqref{f_1}-\eqref{f_{4}}.

Recall that $h_{r}(x, t)=f(x, M_{r}^{-1}(t))$.
We prove the following 

\medskip
{\bf Claim:} $h_{r}$ satisfies \eqref{f_1}-\eqref{f_{4}}.
\medskip

Indeed, it follows from \eqref{f_{2}}, \eqref{f_{3}} and \eqref{m_{1}} (see also Lemma \ref{le} $(b)$) that 
 $h_{r}$ satisfies also \eqref{f_{2}}, \eqref{f_{3}} for all $r\geq 0$ and $t\in\R$. Moreover, from \eqref{f_1} and \eqref{m_{1}} (see Lemma \ref{le} $(b)$ again), the function $h_{r}$ verifies, for every $r\geq0$,  the growth condition
\begin{equation}\label{eq:gc}
\exists \,c>0: \quad
  |h_{r}(x, t)|\leq c(1+|t|) \ \mbox{ a.e. in }  \Omega,  \forall t\in \R
\end{equation}
that is, a \eqref{f_1}-like condition.
Moreover from \eqref{f_{4}} and Lemma \ref{le} $(a)$ and $(d)$, we have, for $t_{1}>t_{2}>0$,  a.e. in $\Omega$:
\begin{eqnarray*}
\frac{h_{r}(x, t_{1})}{t_{1}}&=&\frac{f(x, M_{r}^{-1}(t_{1}))}{M_{r}^{-1}(t_{1})}\frac{M_{r}^{-1}(t_{1})}{t_{1}}\\
&\leq&\frac{f(x, M_{r}^{-1}(t_{2}))}{M_{r}^{-1}(t_{2})}\frac{M_{r}^{-1}(t_{1})}{t_{1}}\\
&< &\frac{f(x, M_{r}^{-1}(t_{2}))}{M_{r}^{-1}(t_{2})}\frac{M_{r}^{-1}(t_{2})}{t_{2}}\\
&=&\frac{h_{r}(x, t_{2})}{t_{2}},
\end{eqnarray*}
showing that $h_{r}$ satisfies also  condition \eqref{f_{4}}, proving the claim.

\medskip

Note that the unique point in this case  where  the positivity of $f$ is used is in the above computation.

Then, according to \cite[Theorem 1, first part]{BO} there is at most one solution to \eqref{SP}. To prove the existence,
 define the functions (following \cite{BO})
\begin{eqnarray*}
\alpha_{0}(x):=\lim_{t\to 0}\frac{f(x, t)}{t} \ \ && \Big(\geq f(x,1)\Big)\\ 
\alpha_{\infty}(x):=\lim_{t\to \infty}\frac{f(x, t)}{t}\ \ && \Big(\leq f(x,1)\Big);
\end{eqnarray*}
then, as a  consequence of \eqref{f_{3}} and \eqref{f_{4}}, there is  a constant $C\geq0$ such that
\begin{equation*}\label{eq:limitacao}
\alpha_{0}(x)\in[0,+\infty] \quad \text{and} \quad \alpha_{\infty}(x)\in[0 ,C] \quad \text{for a.e. }   x\in \Omega.
\end{equation*}

At the same way it is natural to introduce, for  every $r\geq 0$, the functions
\begin{eqnarray*}
\alpha_{0}^{r}(x)&:=&\lim_{t\to 0}\frac{h_{r}(x, t)}{t} \\
 \alpha_{\infty}^{r}(x)&:=&\lim_{t\to+\infty}\frac{h_{r}(x, t)}{t} 
\end{eqnarray*}
Note that the limits exist, since $h_{r}$ satisfies \eqref{f_{4}}, with 
$$\alpha_{0}^{r}(x) \in [h_{r}(x,1), +\infty] \quad \text{ and } \quad \alpha_{\infty}^{r}(x)\in [0, h_{r}(x,1)] , \quad \text{for a.e. } \ x\in \Omega.$$
Since $h_{r}$ satisfies also  \eqref{f_{3}}, for  $C_{r}:=|h_{r}(\cdot,1)|_{\infty}$ it is actually
\begin{equation}\label{eq:limitacoes}
\alpha_{0}^{r}(x)\in [0,+\infty] \quad \text{and} \quad \alpha_{\infty}^{r}(x)\in[0, C_{r}], \quad \text{for a.e. } \ x\in \Omega.
\end{equation}
Of course it may happens $\alpha_{0}^{r}=+\infty$.

\begin{remark}
We recall (see \cite[Section 3]{BO}) that  given a measurable function $\alpha$ which is bounded above or below, we denote with
\begin{equation*}\label{1}
\lambda_{1}(-\Delta-\alpha(x))=\inf_{u\in H_{0}^{1}(\Omega)}\frac{\int_{\Omega}|\nabla u|^{2}dx-\int_{\Omega, u\neq0}\alpha(x)u^{2}dx}{\int_{\Omega} u^{2}dx}
\end{equation*}
``first eigenvalue'' of the operator $-\Delta-\alpha(x)$ with Dirichlet boundary condition. Note that it can be also
 $+\infty$ or $-\infty$. However, in our case due to \eqref{eq:limitacoes} it is
$$\lambda_{1}(-\Delta - a_{0}^{r}(x)) \in [-\infty, +\infty) \quad \text{ and } \quad \lambda_{1}(-\Delta - a_{\infty}^{r}(x)) \in (-\infty, +\infty).$$
\end{remark}

Then, if we impose  the further condition \medskip
\begin{enumerate}[label=(f\arabic*),ref=f\arabic*,start=5]
\item\label{f_5} for every $r\geq0$
$$\lambda_{1}\left(-\Delta-\alpha^{r}_{0}(x)\right)<0<\lambda_{1}\left(-\Delta-\alpha^{r}_{\infty}(x)\right)$$ 
\end{enumerate}
we can apply \cite[Theorem 1, second part]{BO} to deduce the existence of a unique 
nontrivial and nonnegative weak solution $v_{r}$ to problem \eqref{SP}, with 
$v_{r} \in W^{2, q}{(\Omega)}\cap L^{\infty}(\Omega)$, for all $q\in(1,+\infty)$.

\medskip

The main result of this section is the following
\begin{theorem}\label{th:casof(xu)}
Under the conditions \eqref{m_{0}}-\eqref{m_{2}} and \eqref{f_1}-\eqref{f_5}, problem \eqref{P} admits a nontrivial and nonnegative solution $u\in W^{2, q}{(\Omega)}\cap L^{\infty}(\Omega)$, for all $q\in(1,\infty)$. 
\end{theorem}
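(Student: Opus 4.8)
The plan is to verify that Theorem \ref{th:casof(xu)} is a direct application of the criterion provided by Theorem \ref{th:th}. Concretely, I need to check that the hypotheses \eqref{f_1}--\eqref{f_5} on $f$ imply: (i) for every $r\geq 0$, the semilinear problem \eqref{SP} has a unique nontrivial (nonnegative) weak solution $v_r$ with the claimed regularity, and (ii) the map $V:r\mapsto \int_\Omega |\nabla v_r|^2\,dx$ is bounded on $[0,+\infty)$. Once both are in place, Theorem \ref{th:th} yields a nontrivial weak solution $u_*=u_{r_*}=M_{r_*}^{-1}(v_{r_*})$ of \eqref{P}, and its regularity is inherited from that of $v_{r_*}$ via the diffeomorphism $M_{r_*}$ and the properties in Lemma \ref{le}.

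For step (i): the Claim already established (using \eqref{f_1}--\eqref{f_{4}}, \eqref{m_{1}} and Lemma \ref{le} $(a),(b),(d)$) that $h_r(x,t)=f(x,M_r^{-1}(t))$ inherits \eqref{f_1}--\eqref{f_{4}}, and that the limiting functions $\alpha_0^r,\alpha_\infty^r$ satisfy \eqref{eq:limitacoes}. Together with the spectral condition \eqref{f_5} this puts us exactly in the setting of \cite[Theorem 1]{BO}: the first part gives at most one solution, and the second part gives existence of a unique nontrivial, nonnegative $v_r\in W^{2,q}(\Omega)\cap L^\infty(\Omega)$ for all $q\in(1,\infty)$. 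So step (i) is essentially a citation once the Claim is invoked.

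The main work is step (ii), the uniform bound on $V$. Here I would test \eqref{SP} with $v_r$ itself and use the growth condition \eqref{eq:gc}, $|h_r(x,t)|\le c(1+|t|)$, to obtain
\begin{equation*}
\int_\Omega |\nabla v_r|^2\,dx=\int_\Omega h_r(x,v_r)v_r\,dx\le c\int_\Omega (|v_r|+|v_r|^2)\,dx .
\end{equation*}
The linear term is controlled by Poincar\'e and Young, but the quadratic term $c\int_\Omega v_r^2\,dx$ must be absorbed into the left side, which requires $c$ to be strictly below the first Dirichlet eigenvalue $\lambda_1(-\Delta)$ of $\Omega$ — and there is no reason the bare constant in \eqref{f_1} is that small. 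The correct argument must instead use the sign/monotonicity structure: the sharp constant is $\alpha_\infty^r(x)=\lim_{t\to\infty}h_r(x,t)/t$, and the hypothesis $\lambda_1(-\Delta-\alpha_\infty^r(x))>0$ is precisely what makes the quadratic form $\int_\Omega|\nabla v|^2-\int_\Omega \alpha_\infty^r v^2$ coercive. Since $h_r(x,t)\le \alpha_\infty^r(x)\,t$ for $t\ge0$ (a consequence of \eqref{f_{4}}: $t\mapsto h_r(x,t)/t$ is decreasing, so its value at $t$ never exceeds its limit $\alpha_\infty^r(x)$ — wait, decreasing means the value at finite $t$ is \emph{larger}; so one instead uses $h_r(x,t)\le h_r(x,1)\,t\le C_r t$ for $t\ge1$ plus boundedness on $[0,1]$), one gets an estimate of the form $\int_\Omega|\nabla v_r|^2\le \int_\Omega (\alpha_\infty^r(x)+\varepsilon) v_r^2 + C_\varepsilon$. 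The real subtlety is that $\alpha_\infty^r$ and hence $C_r=|h_r(\cdot,1)|_\infty$ depend on $r$; I must show this dependence is benign. But $h_r(x,1)=f(x,M_r^{-1}(1))$ and by Lemma \ref{le} $(b)$, $|M_r^{-1}(1)|\le \mathfrak m^{-1}$, so using the $r$-independent growth $0\le f(x,t)\le c(1+t)$ one gets $|h_r(x,1)|\le c(1+\mathfrak m^{-1})$ uniformly in $r$. This uniformity, fed into a Brezis--Oswald-type a priori estimate together with \eqref{f_5}, should yield a bound on $\int_\Omega |\nabla v_r|^2\,dx$ independent of $r$, i.e. $V\in L^\infty([0,\infty))$.

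I expect the a priori bound to be the only genuine obstacle, and within it the delicate point is making the spectral coercivity quantitative and uniform in $r$: one needs not just $\lambda_1(-\Delta-\alpha_\infty^r(x))>0$ for each $r$ but a lower bound bounded away from $0$ as $r\to\infty$, or else a direct comparison argument. The cleanest route is probably to bound $v_r$ in $L^\infty$ first (via the $W^{2,q}$ regularity and a sub/supersolution comparison with the solution of $-\Delta w=c(1+w)$, which exists because the sublinear-at-infinity structure forces $\alpha_\infty^r\le C_r$ with $C_r$ uniformly bounded), deducing a uniform $L^\infty$ bound on $v_r$, and then the $H^1_0$ bound follows immediately by testing with $v_r$. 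With $V$ bounded, Theorem \ref{th:th} applies verbatim and the proof is complete; the regularity statement $u\in W^{2,q}(\Omega)\cap L^\infty(\Omega)$ for all $q$ then follows since $u_{r_*}=M_{r_*}^{-1}(v_{r_*})$ with $M_{r_*}^{-1}$ Lipschitz (Lemma \ref{le} $(b)$) and a standard bootstrap using that $m_{r_*}(u_{r_*})$ is bounded below and continuous.
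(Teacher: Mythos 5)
Your overall strategy coincides with the paper's: reduce to Theorem \ref{th:th}, get existence/uniqueness of $v_{r}$ from the Claim plus \cite[Theorem 1]{BO}, and obtain the boundedness of $V$ by first proving a uniform $L^{\infty}$ bound $0\leq v_{r}\leq C$ and then testing \eqref{SP} with $v_{r}$ (which, as you and the paper both note, turns the problematic quadratic term into $c(1+C)\int_{\Omega}v_{r}$ and closes the estimate via Poincar\'e). You also correctly identify the crucial point that the growth bound \eqref{eq:gc} on $h_{r}$ is independent of $r$, and your computation $|h_{r}(x,1)|=|f(x,M_{r}^{-1}(1))|\leq c(1+\mathfrak m^{-1})$ is exactly the kind of uniformity the argument needs.

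The one step that does not survive scrutiny is your proposed mechanism for the uniform $L^{\infty}$ bound, namely a sub/supersolution comparison with a solution of $-\Delta w=c(1+w)$. If $c\geq\lambda_{1}(-\Delta)$ (and nothing in \eqref{f_1} prevents this), that problem has no positive solution in general, and even when a solution exists the comparison $v_{r}\leq w$ would require the maximum principle for $-\Delta-c$, which also fails for $c\geq\lambda_{1}$; this is the same obstruction you yourself identified when discarding the naive energy estimate, so it cannot be resurrected at the $L^{\infty}$ level. The paper instead invokes the $L^{\infty}$ argument of \cite[p.~62]{BO}, whose inputs are the $r$-independent growth bound \eqref{eq:gc} together with the structural hypotheses already in force (monotonicity of $t\mapsto h_{r}(x,t)/t$ and the spectral condition \eqref{f_5}); that is the route you should cite rather than the comparison argument. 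Everything else in your write-up (the regularity of $u=M_{r_{*}}^{-1}(v_{r_{*}})$ via Lemma \ref{le} $(b)$ and elliptic bootstrap, and the final appeal to Theorem \ref{th:th}) matches the paper.
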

\begin{proof}
To prove the result, we will use Theorem \ref{th:th}. We have already shown that under our hypothesis problem 
\eqref{SP} has a unique solution $v_{r}$ for every $r\geq0$. It remains to show the boundedness of the map
$V: r\in[0,+\infty)\longmapsto \int_{\Omega}|\nabla v_{r}|^{2}dx\in \mathbb [0,+\infty)$.

It is important to note that since the right hand side of the inequality in \eqref{eq:gc} does not depend on $r$, we can conclude 
by the argument used in  \cite[page 62]{BO} to prove that the solution $u$ is in $L^{\infty}(\Omega)$,
 that there is $C>0$ (independent on $r$) such
\begin{equation*}
0\leq v_{r}\leq C, \quad \forall  r\geq 0.
\end{equation*}
From this the boundedness of the map  $V: r\mapsto \int_{\Omega}|\nabla v_{r}|^{2}dx$
easily follows; indeed
\begin{eqnarray*}
\int_{\Omega}|\nabla v_{r}|^{2}dx&=&\int_{\Omega}h_{r}(x, v_{r})v_{r}\,dx\\
&\leq& \int_{\Omega}c(1+v_{r})v_{r}\,dx\\
&\leq & c(1+C)\int_{\Omega}v_{r}\,dx\\
&\leq & C\left(\int_{\Omega}|\nabla v_{r}|^{2}\,dx\right)^{1/2}.
\end{eqnarray*}
Then, all the assumptions of Theorem \ref{th:th} are satisfied, and we deduce the existence of a solution for \eqref{P}.
\end{proof}

\subsubsection{Some comments on the verification of \eqref{f_5}}
Observe that assumption \eqref{f_5} involves the function $f$ via the change of variable, and also the parameter $r$;
then it
can be not easy  to verify. However
by assuming simpler conditions
(see examples below)
the verification of \eqref{f_5} can be simplified.

\medskip

First of all, it is possible to give the dependence of $\alpha_{0}^{r}(x)$ and $\alpha_{\infty}^{r}$ 
with respect to $r$.

 Fixed $x$, by the definition of $h_{r}$, 
$$\alpha_{0}^{r}(x)=\lim_{t\to 0}\frac{h_{r}(x, t)}{t}=
\lim_{t\to 0}\frac{f(x, M_{r}^{-1}(t))}{M_{r}^{-1}(t)}\frac{M_{r}^{-1}(t)}{t}.
$$
Denoting $t=M_{r}(s)$,  by the  L'H\^{o}spital rule we get
$
\lim_{t\to 0}\frac{M_{r}^{-1}(t)}{t}=\lim_{s\to 0}\frac{1}{m_{r}(s)}=\frac{1}{m(0, r)},
$
with  $0<1/m(0, r)\leq1/\mathfrak m$ for all $r\geq 0$. Then
\begin{equation}\label{eq:ar0}
\alpha_{0}^{r}(x)=\frac{\alpha_{0}(x)}{m(0, r)}\in [0,+\infty].
\end{equation}

 In an analogous way, if we try to estimate
\begin{equation*}\label{3}
\alpha_{\infty}^{r}(x)=\lim_{t\to +\infty}\frac{f(x, M_{r}^{-1}(t))}{M_{r}^{-1}(t)}\frac{M_{r}^{-1}(t)}{t},
\end{equation*}
since $M_{r}^{-1}(t)\to+\infty$  as $t\to+\infty$ (see Lemma \ref{le}, (a)),
 we derive again by the  L'H\^{o}spital rule that  
$$
\alpha_{\infty}^{r}(x)=\frac{\alpha_{\infty}(x)}{m(\infty, r)}, \qquad \text{ where }
\ m(\infty, r):=\lim_{s\to +\infty}m_{r}(s).
$$

Now we have the following

\begin{example}\label{ex:esemplo}
Beside the assumptions \eqref{m_{0}}-\eqref{m_{2}} and \eqref{f_1}-\eqref{f_{4}}, assume also that
\medskip
\begin{enumerate}[label=(f\arabic*),ref=f\arabic*,start=6]
\item\label{f_6} for a.e. $x\in \Omega$ the map $t\mapsto f(x, t)$ is  nondecreasing in $(0,+\infty)$.
\end{enumerate}
\medskip
Then, from Lemma \ref{le} (b),
$$
\alpha_{\infty}^{r}(x)=\lim_{t\to\infty}\frac{f(x, M_{r}^{-1}(t))}{t}\leq \mathfrak{m}^{-1}\lim_{t\to\infty}\frac{f(x, \mathfrak{m}^{-1}t)}{\mathfrak{m}^{-1}t}=\mathfrak{m}^{-1}\alpha_{\infty}(x), \quad \forall \, r\geq 0.
$$ 
Consequently,
\begin{equation}\label{5}
\lambda_{1}\left(-\Delta-\mathfrak{m}^{-1}\alpha_{\infty}(x)\right)\leq \lambda_{1}\left(-\Delta-\alpha_{\infty}^{r}(x)\right), \quad \forall \, r\geq 0.
\end{equation}

Without loss of generality let  $\mathfrak{m}=\inf_{t, r}m(t, r)$ and assume the further condition
\medskip
\begin{enumerate}[label=(m\arabic*),ref=f\arabic*,start=3]
\item\label{m_3} $m(0, r)=\mathfrak{m},  \forall \, r\geq 0$.
\end{enumerate}
\medskip
Then, by \eqref{eq:ar0}, $\alpha_{0}^{r}(x)=\mathfrak{m}^{-1}\alpha_{0}(x)$  and therefore
\begin{equation}\label{6}
\lambda_{1}\left(-\Delta-\alpha_{0}^{r}(x)\right)=\lambda_{1}\left(-\Delta-\mathfrak{m}^{-1}\alpha_{0}(x)\right), 
\quad \forall \, r\geq 0.
\end{equation}
In this case, from (\ref{5}) and (\ref{6}), we see that condition
\begin{equation}\label{eq:condicaoBO}
\lambda_{1}\left(-\Delta-\mathfrak{m}^{-1}\alpha_{0}(x)\right)<0<\lambda_{1}\left(-\Delta-\mathfrak{m}^{-1}\alpha_{\infty}(x)\right)
\end{equation}
(which does not involve $r$) implies condition \eqref{f_5}. 
We are then reduced to verify condition  \eqref{eq:condicaoBO}, as in \cite{BO},
which just involve the pure $f$.
\end{example}

\medskip


Here are two cases in which the verification of \eqref{f_5} can be simplified.

\begin{example}
If  $m(\infty, r)=+\infty$, for all $r\geq 0$
then
$\lambda_{1}\left(-\Delta-\alpha^{r}_{\infty}(x)\right)=\lambda_{1}\left(-\Delta\right)>0$; then the second inequality in the assumption 
\eqref{f_5} is automatically satisfied.  
\end{example}

\begin{example}
When $m(0, r)<1$ for all $r\geq 0$ 
then $\lambda_{1}\left(-\Delta-\alpha^{r}_{0}(x)\right)<\lambda_{1}\left(-\Delta-\alpha_{0}(x)\right)$ and the first inequality in the assumption \eqref{f_5} reduces to prove an inequality which just involve the original nonlinearity $f$,
as in \cite{BO}.
\end{example}

As we have seen, the above Theorem \ref{th:casof(xu)} is based on the fact that
problem \eqref{SP} has a unique solution,
thanks to a result of \cite{BO}.
However other simple cases in which there is the unicity of the solution at
\eqref{SP} in the sublinear case are easily found in the literature (see e.g. \cite{Lions})
then other assumptions on $f$ can be given in order to obtain a solution of \eqref{P}.

\subsection{ Second case}

Under the same assumptions \eqref{m_{0}}-\eqref{m_{2}} on $m$, let $f:\Omega\times \R\to\R$ be Carath\'eodory function satisfying:
\begin{enumerate}[label=(f\arabic*),ref=f\arabic*,start=7]
\item \label{f_7}$f(x, 0)\neq 0$, \smallskip
 \item\label{f_8} there exists $\mu\in L^{2}(\Omega), \delta\in (0,1), \nu>0$, such that 
  $$
  |f(x, t)|\leq \mu(x) + \nu |t|^{\delta }\quad \mbox{ a.e. in $\Omega$ and $\forall t\in \R$},
  $$ 
  \item\label{f_9} there is $\theta\in (0, \mathfrak m\lambda_{1})$ such
  $$
  |f(x, t_{1})-f(x, t_{2})|\leq \theta|t_{1}-t_{2}| \quad \mbox{a.e. in } \Omega,
  $$
 for all $t_{1}, t_{2}\in \R$. Hereafter $\lambda_{1}$ is the first eigenvalue of $-\Delta$ in $H^{1}_{0}(\Omega)$.
 
    \end{enumerate}

\medskip

Since $f$ satisfies \eqref{f_8},  it is clear that the same holds for $h_{r}$, for any $r\geq 0$. 
On the other hand, from \eqref{f_9} and  Lemma \ref{le} $(b)$, we conclude that 
\begin{equation}\label{20}
|h_{r}(x, s_{1})-h_{r}(x, s_{2})|\leq \frac{\theta}{\mathfrak m}|s_{1}-s_{2}| \quad \mbox{ a.e. in $\Omega$},
\end{equation}
 for all $r\geq 0$ and $s_{1}, s_{2}\in \R$.
This redly implies that problem \eqref{SP} has a unique nontrivial solution for each $r\geq 0$.
The argument is known, however we revise it here for completeness.
Defining the solution operator $S_{r}: H_{0}^{1}(\Omega)\to H_{0}^{1}(\Omega)$ which associates to each $w\in H_{0}^{1}(\Omega)$ the unique solution $v$ of problem 
\begin{equation*}
\left \{ \begin{array}{ll}
-\Delta v= h_{r}(x, w) & \mbox{in $\Omega$,}\\
v=0 & \mbox{on $\partial\Omega$,}
\end{array}\right.
\end{equation*}
it follows from (\ref{20}) and the Sobolev embedding that (hereafter $\|\cdot \|$ is the $H^{1}_{0}-$norm)
\begin{eqnarray*}
\|S_{r}(w_{1})-S_{r}(w_{2})\|^{2}&\leq & \int_{\Omega}|h_{r}(x, w_{1})-h_{r}(x, w_{2})||S_{r}(w_{1})-S_{r}(w_{2})|dx\\
&\leq &\frac{\theta}{\mathfrak m} \int_{\Omega}|w_{1}-w_{2}||S_{r}(w_{1})-S_{r}(w_{2})|dx\\
&\leq &\frac{\theta}{\mathfrak m\lambda_{1}}\|w_{1}-w_{2}\|\|S_{r}(w_{1})-S_{r}(w_{2})\|
\end{eqnarray*}
showing that $S_{r}$ is a contraction for each $r\geq 0$. 

It follows by the Banach Fixed Point Theorem that, for each $r\geq 0$, there is a unique nontrivial solution 
$v_{r}\in H^{1}_{0}(\Omega)\cap L^{\infty}(\Omega)$
to problem \eqref{SP}.

Finally from
\begin{eqnarray*}
\| v_{r}\|^{2}=\int_{\Omega}h_{r}(x, v_{r})v_{r}dx 
&\leq&  \lambda_{1}^{-1/2}\| v_{r}\|  \Big | \mu+\nu|v_{r}|^{\delta}\Big|_{2} \\
&\leq &  \lambda_{1}^{-1/2} \| v_{r}\| \left[ |\mu|_{2} + \nu |v_{r}|^{2\delta}_{2} \right]\\
&\leq&   \lambda_{1}^{-1/2} \| v_{r}\|  \left[ |\mu|_{2} + \nu |v_{r}|_{2}^{\delta}|\Omega|^{(1-\delta)/2}\right] \\
&\leq& \lambda_{1}^{-1/2} \| v_{r}\|  \left[ |\mu|_{2} + \nu  \lambda_{1}^{-\delta/2 }  \| v_{r}\| ^{\delta}|\Omega|^{(1-\delta)/2}\right] \\
&=& \lambda_{1}^{-1/2}\| v_{r}\| |\mu|_{2} +\nu \lambda_{1}^{-(\delta+1)/2} |\Omega|^{(1-\delta)/2}\|v_{r}\|^{\delta+1}.
\end{eqnarray*}
we deduce the boundedness of the map $V$.

Then by Theorem \ref{th:th}
we get
\begin{theorem}
Under the conditions \eqref{m_{0}}-\eqref{m_{2}} and \eqref{f_7}-\eqref{f_9}, problem \eqref{P} admits a nontrivial  solution $u\in W^{2, q}{(\Omega)}\cap L^{\infty}(\Omega)$, for all $q\in(1,\infty)$. 
\end{theorem}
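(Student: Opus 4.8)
The plan is to verify the hypotheses of Theorem~\ref{th:th} with the function $f$ satisfying \eqref{f_7}--\eqref{f_9}. By Theorem~\ref{th:th}, it suffices to check two things: that for each $r\geq 0$ the auxiliary problem \eqref{SP} with $h_r(x,t)=f(x,M_r^{-1}(t))$ has a unique nontrivial weak solution $v_r\in H^1_0(\Omega)\cap L^\infty(\Omega)$, and that the map $V:r\mapsto\int_\Omega|\nabla v_r|^2\,dx$ belongs to $L^\infty([0,\infty))$.

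First I would establish the Lipschitz transfer: from \eqref{f_9} and Lemma~\ref{le}~$(b)$ (which gives $|M_r^{-1}(s_1)-M_r^{-1}(s_2)|\leq\mathfrak m^{-1}|s_1-s_2|$), the composition $h_r(x,s)=f(x,M_r^{-1}(s))$ is Lipschitz in $s$ with constant $\theta/\mathfrak m$, uniformly in $r$, and since $\theta<\mathfrak m\lambda_1$ this constant is strictly below $\lambda_1$. Next I would set up the solution operator $S_r:H^1_0(\Omega)\to H^1_0(\Omega)$ sending $w$ to the unique solution $v$ of $-\Delta v=h_r(x,w)$, $v|_{\partial\Omega}=0$ (well-defined since $h_r(x,w)\in L^2(\Omega)$ by \eqref{f_8}), and show it is a contraction: testing the equation for $S_r(w_1)-S_r(w_2)$ against itself, using the uniform Lipschitz bound and the Poincaré inequality $|u|_2\leq\lambda_1^{-1/2}\|u\|$ twice, one arrives at $\|S_r(w_1)-S_r(w_2)\|\leq(\theta/(\mathfrak m\lambda_1))\|w_1-w_2\|$ with contraction ratio $<1$. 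The Banach Fixed Point Theorem then yields a unique $v_r$; nontriviality follows from \eqref{f_7}, since $f(x,0)\not\equiv 0$ forces $h_r(x,0)=f(x,0)\not\equiv 0$ and hence $v_r\not\equiv 0$; the regularity $v_r\in W^{2,q}(\Omega)\cap L^\infty(\Omega)$ for all $q\in(1,\infty)$ comes from the sublinear growth \eqref{f_8} and elliptic regularity bootstrap.

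The remaining point is the uniform bound on $V$. Here I would test \eqref{SP} with $v_r$ itself to get $\|v_r\|^2=\int_\Omega h_r(x,v_r)v_r\,dx$, then bound $|h_r(x,v_r)|\leq\mu(x)+\nu|v_r|^\delta$ (inherited directly from \eqref{f_8} since $|M_r^{-1}(t)|\leq\mathfrak m^{-1}|t|$ only changes constants — in fact one can take the same $\mu,\nu$ up to harmless rescaling), and apply Cauchy--Schwarz followed by Poincaré and the elementary inequality $|v_r|_{2\delta}\leq|v_r|_2^\delta|\Omega|^{(1-\delta)/2}$. This produces an estimate of the form $\|v_r\|^2\leq A\|v_r\|+B\|v_r\|^{\delta+1}$ with $A,B$ independent of $r$; since $\delta+1<2$, this is a superlinear-beats-the-right-hand-side inequality and forces $\|v_r\|\leq C$ for a constant $C$ independent of $r$, so $V(r)=\|v_r\|^2\leq C^2$ and $V\in L^\infty([0,\infty))$.

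With both hypotheses of Theorem~\ref{th:th} verified, that theorem delivers a nontrivial weak solution $u\in H^1_0(\Omega)\cap L^\infty(\Omega)$ of \eqref{P}, and the $W^{2,q}$-regularity is again obtained by the elliptic bootstrap applied to $-\Delta v=h_{r_*}(x,v)$ at the fixed point, transported back to $u=M_{r_*}^{-1}(v)$. I do not anticipate a serious obstacle: the only mildly delicate point is making sure the sublinear growth constant $\nu$ in \eqref{f_8} and the Lipschitz constant $\theta/\mathfrak m$ in \eqref{20} are genuinely $r$-independent, but this is immediate from the uniform Lipschitz bound on $M_r^{-1}$ in Lemma~\ref{le}~$(b)$; everything else is a routine contraction-mapping plus a priori estimate argument, essentially identical in structure to the computation already displayed before the theorem statement.
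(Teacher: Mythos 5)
Your proposal is correct and follows essentially the same route as the paper: transfer the Lipschitz bound \eqref{f_9} to $h_r$ via Lemma \ref{le}~$(b)$ to get a contraction ratio $\theta/(\mathfrak m\lambda_1)<1$ for the solution operator, apply Banach's Fixed Point Theorem for existence and uniqueness of $v_r$, and then derive the $r$-uniform bound $\|v_r\|^2\leq A\|v_r\|+B\|v_r\|^{\delta+1}$ with $\delta+1<2$ to conclude $V\in L^\infty([0,\infty))$ before invoking Theorem \ref{th:th}. No substantive differences from the paper's argument.
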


Of course the theorem also holds if in \eqref{f_8} we allow $\delta=1$ with $\nu \in (0,\lambda_{1})$.


\end{document}